%
%
%
%
\documentclass{proc-l}

\newtheorem{theorem}{Theorem}[section]
\newtheorem{lemma}[theorem]{Lemma}

\newtheorem{proposition}[theorem]{Proposition}

\theoremstyle{definition}
\newtheorem{definition}[theorem]{Definition}
\newtheorem{example}[theorem]{Example}

\theoremstyle{remark}
\newtheorem{remark}[theorem]{Remark}

\numberwithin{equation}{section}



\begin{document}

\title{Generalized discrete operators}

\author{Rui A. C. Ferreira}
\address{Grupo F\'isica-Matem\'atica, Faculdade de Ci\^encias, Universidade de Lisboa,
Av. Prof. Gama Pinto, 2, 1649-003, Lisboa, Portugal.}
\email{raferreira@fc.ul.pt}
\thanks{Rui A. C. Ferreira was supported by the ``Funda\c{c}\~{a}o para a Ci\^encia e a Tecnologia (FCT)" through the program ``Stimulus of Scientific Employment, Individual Support-2017 Call" with reference CEECIND/00640/2017.}

\subjclass[2000]{Primary 39A12 , 26A33}
\date{}

\keywords{Discrete calculus, fractional operators.}

\begin{abstract}
We define a class of discrete operators that, in particular, include the delta and nabla fractional operators.  Moreover, we prove the fundamental theorem  of  calculus for these operators.
\end{abstract}

\maketitle

\section{Preamble}\label{Preamble}

The theory of discrete fractional calculus is currently  an area of mathematics of intensive research, having appeared in the literature many articles on the subject in the past decade (see \cite{Abdeljawad2,Atici1,Cermak,Ferreira1,Ferreira3,Ferreira2,Gray,MillerRoss} and the references therein). Two parallel concepts were introduced, namely, the delta (or forward) operators and the nabla (or backward) operators (see \cite[Sections 2 and 3]{GoodrichBook}).

Consider the falling function defined, for $x,y\in A\subset\mathbb{R}$, by
$$x^{\underline{y}}=\left\{\begin{array}{llll}x(x-1)\ldots(x-y+1)\ \mbox{for}\ y\in\mathbb{N}_1, \\
1\ \mbox{for } y=0,\\
\frac{\Gamma(x+1)}{\Gamma(x+1-y)}\ \mbox{for }x,x-y\notin\mathbb{N}^{-1},\\
0\ \mbox{for }x\notin\mathbb{N}^{-1}\mbox{ and }x-y\in\mathbb{N}^{-1},
    \end{array}\right.
$$
and the rising function defined by 
\begin{equation}\label{rfunction}
x^{\overline{y}}=(x+y-1)^{\underline{y}}.
\end{equation}
Then, letting $\mathbb{N}_a=\{a,a+1,\ldots\}$ with $a\in\mathbb{R}$ and $f:\mathbb{N}_a\to\mathbb{R}$ being a function, the delta Riemann--Liouville fractional sum of $f$ of order $\nu>0$ is defined by 
$$\Delta_{a}^{-\nu} f(t)=\sum_{s=a}^{t-\nu}\frac{(t-(s+1))^{\underline{\nu-1}}}{\Gamma(\nu)}f(s)\mbox{ for } t\in\mathbb{N}_{a+\nu-1},$$
while the nabla Riemann--Liouville fractional sum of $f$ of order $\nu>0$ is defined by 
$$\nabla_{a}^{-\nu} f(t)=\sum_{s=a+1}^{t}\frac{(t-(s-1))^{\overline{\nu-1}}}{\Gamma(\nu)}f(s)\mbox{ for } t\in\mathbb{N}_{a}.$$
One can observe that the above sums are of the type 
$$\sum k(t-s\pm 1)f(s),$$
for a certain kernel function $k$. Now, if we consider the delta difference operator $\Delta f(t)=f(t+1)-f(t)$ and the nabla difference operator $\nabla f(t)=f(t)-f(t-1)$, then the delta and nabla Riemann--Liouville fractional differences of $f$ of order $0<\alpha\leq 1$ are defined by, 

$$\Delta_{a}^{\alpha} f(t)=\Delta [\Delta_{a}^{-(1-\alpha)}f](t),\ t\in\mathbb{N}_{a+1-\alpha},$$ 
and $$\nabla_{a}^{\alpha} f(t)=\nabla [\nabla_{a}^{-(1-\alpha)}f](t),\ t\in\mathbb{N}_{a+1},$$
respectively.

In this work we aim to construct a summation and a difference operator generalizing the above ones and satisfying the fundamental theorem of calculus (we are particularly inspired by the work of Kochubei \cite{Kochubei} in which such kind of operators were defined for (continuous) integrals and derivatives). Hopefully, these very general operators will be useful for researchers acting within the discrete calculus theory.

\section{Main results}

Let us start by recalling the discrete convolution of two functions $f,g:\mathbb{N}_a\to\mathbb{R}$, with $a\in\mathbb{R}$: it is denoted by $(f*g)_a$ and defined by
$$(f*g)_a(t)=\sum_{\tau=a}^{t-1}f(t-\tau-1+a)g(\tau),\quad t\in\mathbb{N}_a.$$
Here and throughout this text we assume that empty sums are equal to zero. Therefore, $(f*g)_a(a)=0$ for all functions $f$ and $g$. It is known that the convolution is commutative and associative (cf. \cite[Theorem 5.4]{Bohner}).

Let us introduce the following set of pair-of-functions: For $a\in\mathbb{R}$, put $$\mathcal{C}_a=\{p,q:\mathbb{N}_a\to\mathbb{R}:(p*q)_a(t)=1\ \mbox{for all }t\in\mathbb{N}_{a+1}\}.$$

Before we proceed, we state here the fractional power rule, whose proof may be found in 
\cite{Ferreira2}.

\begin{lemma}\label{lem1}
Let $a\in\mathbb{R}$. Assume $\mu\notin\mathbb{N}^{-1}$ and $\nu>0$. If $\mu+\nu\notin\mathbb{N}^{-1}$, then
\begin{equation}\label{FPS11}
    \Delta_{a+\mu}^{-\nu}[(s-a)^{\underline{\mu}}](t)=\frac{\Gamma(\mu+1)}{\Gamma(\mu+\nu+1)}(t-a)^{\underline{\mu+\nu}},\mbox{ for } t\in\mathbb{N}_{a+\mu+\nu}.
\end{equation}
\end{lemma}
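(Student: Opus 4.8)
The plan is to prove the fractional power rule by direct computation, unwinding the definition of the delta Riemann--Liouville fractional sum and then recognizing the resulting sum as a known closed form. Writing out the left-hand side, I would start from
$$\Delta_{a+\mu}^{-\nu}[(s-a)^{\underline{\mu}}](t)=\sum_{s=a+\mu}^{t-\nu}\frac{(t-(s+1))^{\underline{\nu-1}}}{\Gamma(\nu)}(s-a)^{\underline{\mu}},$$
valid for $t\in\mathbb{N}_{a+\mu+\nu}$. The goal is to show this equals $\tfrac{\Gamma(\mu+1)}{\Gamma(\mu+\nu+1)}(t-a)^{\underline{\mu+\nu}}$.

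First I would perform a change of summation index to center the computation and convert the falling factorials into Gamma-function quotients, using $x^{\underline{y}}=\Gamma(x+1)/\Gamma(x+1-y)$. Setting $r=s-(a+\mu)$ so that $r$ runs over $\{0,1,\dots,t-\nu-a-\mu\}$, each summand becomes a ratio of Gamma functions in $r$. After clearing the $1/\Gamma(\nu)$ and $\Gamma(\mu+1)$ factors, the sum should reduce to a single hypergeometric-type identity: a terminating sum of the form $\sum_r \binom{\,\cdot\,}{r}\,(\cdots)$ that telescopes or matches the Vandermonde/Chu convolution. Concretely I would aim to express the sum as a ${}_2F_1$ evaluated at $1$ and invoke the Chu--Vandermonde identity to collapse it to a single Gamma quotient, which I then rewrite back as $(t-a)^{\underline{\mu+\nu}}$.

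The main obstacle will be bookkeeping at the endpoints of summation and the hypotheses $\mu\notin\mathbb{N}^{-1}$, $\mu+\nu\notin\mathbb{N}^{-1}$, which are exactly the conditions guaranteeing that none of the Gamma functions in the quotients hit a pole and that the piecewise definition of $x^{\underline y}$ stays in its analytic branch. I would need to check that the summation range is nonempty and correctly aligned (the shift by $\nu-1$ inside the kernel and the starting point $a+\mu$ must combine so that the upper and lower limits produce the right number of terms), since an off-by-one error here propagates into a wrong constant. A secondary subtlety is that $\nu>0$ need not be an integer, so the Chu--Vandermonde step must be applied in its Gamma-function (non-integer-parameter) form rather than the purely combinatorial one; I would state that version explicitly before applying it. Since the identity is already established in \cite{Ferreira2}, I would, in the interest of brevity, either cite that reference directly or reproduce only the key index substitution and the single application of the Vandermonde identity, leaving the elementary Gamma-function manipulations to the reader.
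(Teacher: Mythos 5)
Your computation is correct, but note that the paper does not actually prove Lemma \ref{lem1}: it states the result and explicitly defers the proof to \cite{Ferreira2}, so your direct argument is a self-contained alternative to the paper's bare citation rather than a parallel of any in-text proof. Your route does go through exactly as planned: writing $t=a+\mu+\nu+n$ with $n\in\mathbb{N}_0$ and substituting $r=s-(a+\mu)$, the left-hand side becomes
\[
\sum_{r=0}^{n}\frac{\Gamma(\nu+n-r)}{\Gamma(\nu)\,\Gamma(n-r+1)}\cdot\frac{\Gamma(\mu+r+1)}{\Gamma(r+1)}
=\Gamma(\mu+1)\sum_{r=0}^{n}\binom{\mu+r}{r}\binom{\nu+n-r-1}{n-r},
\]
and the Vandermonde convolution evaluates the sum to $\binom{\mu+\nu+n}{n}$, which equals $\frac{\Gamma(\mu+1)}{\Gamma(\mu+\nu+1)}(t-a)^{\underline{\mu+\nu}}$ as required; the hypotheses $\mu\notin\mathbb{N}^{-1}$ and $\mu+\nu\notin\mathbb{N}^{-1}$ enter precisely where you say, keeping $(s-a)^{\underline{\mu}}$ and $(t-a)^{\underline{\mu+\nu}}$ in the Gamma-quotient branch of the piecewise definition (while $\nu>0$ keeps the kernel there and makes the summation range nonempty on $\mathbb{N}_{a+\mu+\nu}$). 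One simplification worth noting: your worry about the non-integer form of Chu--Vandermonde can be bypassed entirely, since for fixed $n$ both sides of the displayed convolution identity are polynomials in $(\mu,\nu)$, so the classical integer-parameter Vandermonde identity (equivalently, comparing coefficients of $x^n$ in $(1-x)^{-(\mu+1)}(1-x)^{-\nu}=(1-x)^{-(\mu+\nu+1)}$) already implies the general real-parameter case; alternatively, your terminating ${}_2F_1(-n,\mu+1;1-\nu-n;1)$ formulation is legitimate because $(1-\nu-n)_n=(-1)^n(\nu)_n\neq 0$ for $\nu>0$, so no denominator parameter degenerates. As for the trade-off: your computation buys self-containedness at the cost of half a page, while the paper's citation buys brevity; the machinery is in any case of the same hypergeometric-summation kind as in the cited source, whose title refers to the Saalsch\"utz theorem, of which Chu--Vandermonde is the terminating ${}_2F_1$ special case.
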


\begin{example}
Let $a\in\mathbb{R}$ and $\alpha\in\mathbb{R}^+\backslash\mathbb{N}_1$. Define the functions
$$p(t)=\frac{(t-a)^{\underline{\alpha-1}}}{\Gamma(\alpha)}\mbox{ and } q(t)=\frac{(t-a+1-2\alpha)^{\underline{-\alpha
}}}{\Gamma(1-\alpha)},\quad t\in\mathbb{N}_{a+\alpha-1}.$$
Then, for all $t\in\mathbb{N}_{a+\alpha}$, we have
\begin{align*}
    (p*q)_{a+\alpha-1}(t)&=\sum_{\tau=a+\alpha-1}^{t-1}\frac{(t-\tau-1+\alpha-1)^{\underline{\alpha-1}}}{\Gamma(\alpha)}\frac{(\tau-a+1-2\alpha)^{\underline{-\alpha}}}{\Gamma(1-\alpha)}\\
    &=\sum_{\tau=a}^{t-\alpha}\frac{(t-(\tau+1))^{\underline{\alpha-1}}}{\Gamma(\alpha)}\frac{(\tau-(a+\alpha))^{\underline{-\alpha}}}{\Gamma(1-\alpha)}\\
    &=\frac{ \Delta_{a}^{-\alpha}[(\tau-(a+\alpha))^{\underline{-\alpha}}](t)}{\Gamma(1-\alpha)}\\
    &=1,
\end{align*}
where we used \eqref{FPS11} to obtain the last equality. Hence, the pair $(p,q)\in\mathcal{C}_{a+\alpha-1}$.

Analogously, if we define the functions 
$$\hat{p}(t)=\frac{(t-a+1)^{\overline{\alpha-1}}}{\Gamma(\alpha)}\mbox{ and } \hat{q}(t)=\frac{(t-a+1)^{\overline{-\alpha}}}{\Gamma(1-\alpha)},$$
then we may show, upon using \eqref{rfunction} and Lemma \ref{lem1}, that $(\hat{p},\hat{q})\in\mathcal{C}_a$. Indeed,
\begin{align*}
    (\hat{p}*\hat{q})_{a}(t)&=\sum_{\tau=a}^{t-1}\frac{(t-\tau)^{\overline{\alpha-1}}}{\Gamma(\alpha)}\frac{(\tau-a+1)^{\overline{-\alpha}}}{\Gamma(1-\alpha)}\\
    &=\sum_{\tau=a}^{t-1}\frac{(t+\alpha-1-(\tau+1))^{\underline{\alpha-1}}}{\Gamma(\alpha)}\frac{(\tau-(a+\alpha))^{\underline{-\alpha}}}{\Gamma(1-\alpha)}\\
    &=\frac{ \Delta_{a}^{-\alpha}[(\tau-(a+\alpha))^{\underline{-\alpha}}](t+\alpha-1)}{\Gamma(1-\alpha)}\\
    &=1,
\end{align*}

\end{example}

\begin{remark}
A somewhat more elaborate example of a pair of functions belonging to $\mathcal{C}_0$ may be found in \cite[Example 9]{Goodrich}.
\end{remark}

\begin{definition}
Let $p,q:D\subset\mathbb{R}\to\mathbb{R}$ be two functions. We define the generalized fractional sum (GFS) of $f:\mathbb{N}_a\to\mathbb{R}$ by
$$\mathcal{S}_{\{(p);a\}}f(t)=\sum_{\tau=a}^{t-1}p(t-\tau-1+a)f(\tau),\quad t\in\mathbb{N}_a.$$
Moreover, the generalized fractional difference of Riemann--Liouville type is defined by
$$^{RL}\mathcal{D}_{\{(q);a\}}f(t)=\Delta\sum_{\tau=a}^{t-1}q(t-\tau-1+a)f(\tau),\quad t\in\mathbb{N}_{a+1},$$
while the generalized fractional difference of Caputo type is defined by
$$^{C}\mathcal{D}_{\{(q);a\}}f(t)=\sum_{\tau=a}^{t-1}q(t-\tau-1+a)\Delta f(\tau),\quad t\in\mathbb{N}_{a+1}.$$
\end{definition}

The previous definition includes the delta and nabla operators mentioned in Section \ref{Preamble}. Indeed, first consider a function $f:\mathbb{N}_a\to\mathbb{R}$. Then,

\begin{itemize}
    \item Consider $p(t)=\frac{(t-a)^{\underline{\alpha-1}}}{\Gamma(\alpha)}$, for $t\in\mathbb{N}_{a+\alpha-1}$. Then, for $t\in\mathbb{N}_{a+\alpha-1}$, we have
    \begin{align*}
        \mathcal{S}_{\{(p);a+\alpha-1\}}[f(\tau+1-\alpha)](t)&=\sum_{\tau=a+\alpha-1}^{t-1}\frac{(t-(\tau+1)+\alpha-1)^{\underline{\alpha-1}}}{\Gamma(\alpha)}f(\tau+1-\alpha)\\
        &=\sum_{\tau=a}^{t-\alpha}\frac{(t-(\tau+1))^{\underline{\alpha-1}}}{\Gamma(\alpha)}f(\tau)\\
        &=\Delta_a^{-\alpha}f(t).
    \end{align*}
    
    \item Consider $q(t)=\frac{(t-a)^{\underline{-\alpha}}}{\Gamma(1-\alpha)}$, for $t\in\mathbb{N}_{a-\alpha}$. Then, for $t\in\mathbb{N}_{a+1-\alpha}$, we have
    \begin{align*}
        ^{RL}\mathcal{D}_{\{(q);a+\alpha-1\}}[f(\tau+\alpha)](t)&=\Delta\sum_{\tau=a-\alpha}^{t-1}\frac{(t-(\tau+1)-\alpha)^{\underline{-\alpha}}}{\Gamma(1-\alpha)}f(\tau+\alpha)\\
        &=\Delta\sum_{\tau=a}^{t+\alpha-1}\frac{(t-(\tau+1))^{\underline{-\alpha}}}{\Gamma(1-\alpha)}f(\tau)\\
        &=\Delta_a^{\alpha}f(t).
    \end{align*}
\end{itemize}

Now, consider a function $f:\mathbb{N}_{a+1}\to\mathbb{R}$. Then,
\begin{itemize}
    \item Consider $\hat{p}(t)=\frac{(t-a+1)^{\overline{\alpha-1}}}{\Gamma(\alpha)}$, for $t\in\mathbb{N}_{a}$. Then, for $t\in\mathbb{N}_{a}$, we have
    \begin{align*}
        \mathcal{S}_{\{(\hat{p});a\}}[f(\tau+1)](t)&=\sum_{\tau=a}^{t-1}\frac{(t-\tau)^{\overline{\alpha-1}}}{\Gamma(\alpha)}f(\tau+1)\\
        &=\sum_{\tau=a+1}^{t}\frac{(t-(\tau-1))^{\overline{\alpha-1}}}{\Gamma(\alpha)}f(\tau)\\
        &=\nabla_a^{-\alpha}f(t).
    \end{align*}
    
    \item Consider $\hat{q}(t)=\frac{(t-a+1)^{\overline{-\alpha}}}{\Gamma(1-\alpha)}$, for $t\in\mathbb{N}_{a}$. Then, for $t\in\mathbb{N}_{a+1}$, we have
    \begin{align*}
        ^{RL}\mathcal{D}_{\{(\hat{q});a+1\}}[f](t)&=\Delta\sum_{\tau=a+1}^{t-1}\frac{(t-\tau)^{\overline{-\alpha}}}{\Gamma(1-\alpha)}f(\tau)\\
        &=\sum_{\tau=a+1}^{t}\frac{(t+1-\tau)^{\overline{-\alpha}}}{\Gamma(1-\alpha)}f(\tau)-\sum_{\tau=a+1}^{t-1}\frac{(t-\tau)^{\overline{-\alpha}}}{\Gamma(1-\alpha)}f(\tau)\\
        &=\nabla_a^{\alpha}f(t).
    \end{align*}
\end{itemize}

To prove our main result we need the following (cf. \cite[Theorem 1.67]{GoodrichBook}):

\begin{proposition}[Leibniz rule]\label{leibnizrule}
Assume $f:\mathbb{N}_{a+1}\times\mathbb{N}_a\to\mathbb{R}$. Then,
$$\Delta_t\sum_{s=a}^{t-1}f(t,s)=\sum_{s=a}^{t-1}\Delta_t f(t,s)+f(t+1,t),\quad t\in\mathbb{N}_a.$$
\end{proposition}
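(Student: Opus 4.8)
The plan is to unfold the forward difference $\Delta_t$ directly from its definition and reorganize the resulting sums. Writing $F(t)=\sum_{s=a}^{t-1}f(t,s)$, I would first compute
$$\Delta_t F(t)=F(t+1)-F(t)=\sum_{s=a}^{t}f(t+1,s)-\sum_{s=a}^{t-1}f(t,s),$$
where the upper limit of the first sum has become $t$ because the running index was incremented in $F(t+1)=\sum_{s=a}^{(t+1)-1}f(t+1,s)$.

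The key manipulation is to peel off the top term $s=t$ from the first sum, writing $\sum_{s=a}^{t}f(t+1,s)=\sum_{s=a}^{t-1}f(t+1,s)+f(t+1,t)$. Substituting this back and combining the two sums over the common range $a\le s\le t-1$ gives
$$\Delta_t F(t)=\sum_{s=a}^{t-1}\bigl[f(t+1,s)-f(t,s)\bigr]+f(t+1,t)=\sum_{s=a}^{t-1}\Delta_t f(t,s)+f(t+1,t),$$
which is exactly the claimed identity, since $f(t+1,s)-f(t,s)=\Delta_t f(t,s)$ by the definition of the difference in the $t$ variable.

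This argument is essentially a bookkeeping computation, so I do not anticipate any genuine analytic obstacle; the only point requiring a little care is the boundary case $t=a$ together with the stated domain $f:\mathbb{N}_{a+1}\times\mathbb{N}_a\to\mathbb{R}$. At $t=a$ both sums $\sum_{s=a}^{a-1}$ are empty and hence vanish by the standing empty-sum convention, so that both sides reduce to $f(a+1,a)$; in particular the potentially ill-defined value $f(a,s)$ is never actually evaluated, and the identity therefore holds on all of $\mathbb{N}_a$.
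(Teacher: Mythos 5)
Your proof is correct: the paper does not prove this proposition at all, but simply quotes it from \cite[Theorem 1.67]{GoodrichBook}, and your direct computation---expanding $\Delta_t F(t)=F(t+1)-F(t)$, peeling off the $s=t$ term, and recombining over $a\le s\le t-1$---is precisely the standard argument behind that cited result. Your extra remark on the boundary case $t=a$, where both sums are empty and each side equals $f(a+1,a)$, is a careful touch that justifies the claimed validity on all of $\mathbb{N}_a$ with the domain $\mathbb{N}_{a+1}\times\mathbb{N}_a$.
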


It follows the fundamental theorem of calculus for these generalized discrete operators.

\begin{theorem}
Let $a\in\mathbb{R}$ and suppose that $(p,q)\in\mathcal{C}_a$. Then,
\begin{equation}\label{proof1}
    ^{RL}\mathcal{D}_{\{(q);a\}}\mathcal{S}_{\{(p);a\}}f(t)={^{C}\mathcal{D}}_{\{(q);a\}}\mathcal{S}_{\{(p);a\}}f(t)=f(t),\quad t\in\mathbb{N}_{a+1}.
\end{equation}
Moreover,
\begin{equation}\label{d1}
    \mathcal{S}_{\{(p);a\}}{^{RL}\mathcal{D}}_{\{(q);a\}}f(t)=f(t),\quad t\in\mathbb{N}_{a}
\end{equation}
and 
\begin{equation}\label{d2}
    \mathcal{S}_{\{(p);a\}}{^{C}\mathcal{D}}_{\{(q);a\}}f(t)=f(t)-f(a),\quad t\in\mathbb{N}_{a}.
\end{equation}
\end{theorem}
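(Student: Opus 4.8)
The plan is to recast the three operators as discrete convolutions and then argue entirely inside the convolution algebra, using only its commutativity and associativity (recalled just before the definition of $\mathcal{C}_a$), the defining identity of $\mathcal{C}_a$, and the Leibniz rule of Proposition \ref{leibnizrule}. Denoting the convolution by $*$, one has $\mathcal{S}_{\{(p);a\}}f=(p*f)_a$, ${}^{RL}\mathcal{D}_{\{(q);a\}}f=\Delta(q*f)_a$, and ${}^{C}\mathcal{D}_{\{(q);a\}}f=(q*\Delta f)_a$. From Proposition \ref{leibnizrule} I would first extract the discrete product rule $\Delta(g*h)_a(t)=(g*\Delta h)_a(t)+g(t)h(a)$, valid on $\mathbb{N}_a$; this is the only place where the initial value of the right factor enters, and it is the hinge of the whole argument.

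For the forward identity \eqref{proof1} I would treat the Riemann--Liouville difference first. Associativity and commutativity give ${}^{RL}\mathcal{D}_{\{(q);a\}}\mathcal{S}_{\{(p);a\}}f=\Delta((q*p)_a*f)_a=\Delta((p*q)_a*f)_a$. Since $(p,q)\in\mathcal{C}_a$, the inner kernel $(p*q)_a$ equals $1$ on $\mathbb{N}_{a+1}$ and vanishes at $a$ by the empty-sum convention, so $((p*q)_a*f)_a$ collapses to a partial sum of the values of $f$, and the outer $\Delta$ telescopes it to $f(t)$. For the Caputo difference I would instead write ${}^{C}\mathcal{D}_{\{(q);a\}}\mathcal{S}_{\{(p);a\}}f=(q*\Delta(p*f)_a)_a$ and apply the product rule to $\Delta(p*f)_a=(p*\Delta f)_a+p(\cdot)f(a)$. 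The main term $((p*q)_a*\Delta f)_a$ telescopes, while the boundary contribution $f(a)\,(p*q)_a$ is constant equal to $f(a)$ on $\mathbb{N}_{a+1}$ and restores the missing initial value, so the Caputo composition equals the Riemann--Liouville one, namely $f(t)$.

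For the reverse compositions the same ingredients suffice. For \eqref{d1}, $\mathcal{S}_{\{(p);a\}}{}^{RL}\mathcal{D}_{\{(q);a\}}f=(p*\Delta(q*f)_a)_a$; because $(q*f)_a(a)=0$, the product-rule boundary term drops and $(p*\Delta(q*f)_a)_a=\Delta(p*(q*f)_a)_a=\Delta((p*q)_a*f)_a$, which is exactly the Riemann--Liouville forward composition and hence equals $f(t)$. For \eqref{d2}, associativity gives $\mathcal{S}_{\{(p);a\}}{}^{C}\mathcal{D}_{\{(q);a\}}f=(p*(q*\Delta f)_a)_a=((p*q)_a*\Delta f)_a$; collapsing $(p*q)_a$ to $1$ turns this into a telescoping sum of $\Delta f$, which evaluates to $f(t)-f(a)$.

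The delicate point throughout is the behaviour at the left endpoint. The empty-sum convention forces $(p*q)_a(a)=0$ even though $(p*q)_a\equiv1$ on $\mathbb{N}_{a+1}$, so the reduction of $((p*q)_a*f)_a$ to a partial sum of $f$ must discard its top summand, and this has to be tracked together with the outer $\Delta$ rather than treating $(p*q)_a$ as the constant $1$; this bookkeeping is where the index arithmetic is easiest to get wrong. The same endpoint is exactly what separates the two types of difference: the vanishing of $(q*f)_a$ at $a$ kills the product-rule boundary term in \eqref{d1}, whereas in \eqref{d2} and in the Caputo half of \eqref{proof1} that term survives and is precisely what accounts for the $f(a)$. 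I would finally confirm that commutativity, associativity, and the product rule all remain valid up to the endpoint, where the convolutions degenerate to empty sums.
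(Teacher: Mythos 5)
Your proposal takes essentially the same route as the paper's proof: the convolution representations $\mathcal{S}_{\{(p);a\}}f=(p*f)_a$, ${}^{RL}\mathcal{D}_{\{(q);a\}}f=\Delta(q*f)_a$, ${}^{C}\mathcal{D}_{\{(q);a\}}f=(q*\Delta f)_a$, commutativity and associativity, the $\mathcal{C}_a$ identity, and the product rule $\Delta(g*h)_a(t)=(g*\Delta h)_a(t)+g(t)h(a)$ extracted from Proposition \ref{leibnizrule}; your variants (Caputo handled directly by the product rule, \eqref{d1} reduced to the forward composition via $(q*f)_a(a)=0$) are only minor rearrangements of the paper's steps. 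But the endpoint bookkeeping that you yourself isolate as the hinge of the argument does not close. Since the empty-sum convention forces $(p*q)_a(a)=0$ while $(p*q)_a\equiv1$ only on $\mathbb{N}_{a+1}$, the top summand $\tau=t-1$ of the outer convolution carries the factor $(p*q)_a(a)$, so
\[
\bigl((p*q)_a*f\bigr)_a(t)=\sum_{\tau=a}^{t-1}(p*q)_a(t-\tau-1+a)f(\tau)=\sum_{\tau=a}^{t-2}f(\tau),
\]
and the outer $\Delta$ telescopes this to $f(t-1)$, not $f(t)$; likewise $((p*q)_a*\Delta f)_a(t)=f(t-1)-f(a)$. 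Your claim that the discarded top summand, ``tracked together with the outer $\Delta$'', still yields $f(t)$ is therefore exactly the step that fails, and it cannot be repaired inside your scheme: getting $f(t)$ would require $(p*q)_a(a)=1$, which the convention you invoke rules out. A concrete test: take $p\equiv1$ and $q(a)=1$, $q\equiv0$ on $\mathbb{N}_{a+1}$; then $(p,q)\in\mathcal{C}_a$, $\mathcal{S}_{\{(p);a\}}f(t)=\sum_{\tau=a}^{t-1}f(\tau)$ and $(q*g)_a(t)=g(t-1)$ for $t\in\mathbb{N}_{a+1}$, whence ${}^{RL}\mathcal{D}_{\{(q);a\}}\mathcal{S}_{\{(p);a\}}f(t)=f(t-1)$. (Note also that \eqref{d1} is already problematic at $t=a$, where the left-hand side is an empty sum.)

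In fairness, the paper's own proof makes the identical elision: in the chain $\Delta((p*q)_a*f)_a(t)=f(t)$ it treats $(p*q)_a$ as the constant $1$ on all of $\mathbb{N}_a$, including the endpoint value that actually enters the outer convolution, and it likewise reads $((p*q)_a*\Delta f)_a$ as $(1*\Delta f)_a$ in establishing \eqref{d1} and \eqref{d2}. So you correctly located the single delicate point of the whole argument; what is missing is the realization that honest tracking shifts every composition by one unit ($f(t-1)$, resp.\ $f(t-1)-f(a)$), so that under the stated conventions the identities hold only up to this translation. The parts of your argument that do survive are the purely algebraic ones: the first equality in \eqref{proof1}, since $\mathcal{S}_{\{(p);a\}}f(a)=0$ kills the boundary term in the product rule, and the equivalence of \eqref{d1} with the forward composition. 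A correct statement would need either the shifted right-hand sides or amended definitions making the kernel identity hold at $t=a$ as well --- which is in effect what the paper's own reductions do by pairing operators with different base points (e.g.\ base $a$ for the nabla sum but $a+1$ for the nabla difference).
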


\begin{proof}
Since $(p,q)\in\mathcal{C}_a$ we know that
$$(p*q)_a(t)=1\ \mbox{for all }t\in\mathbb{N}_{a+1}.$$
It follows that,
$$^{RL}\mathcal{D}_{\{(q);a\}}\mathcal{S}_{\{(p);a\}}f(t)=\Delta(p*(q*f)_a)_a(t)=\Delta(\underbrace{(p*q)_a}_{=1}*f)_a(t)=f(t),\ t\in\mathbb{N}_{a+1}.$$
Now, by the Leibniz rule, we have that
$$^{RL}\mathcal{D}_{\{(q);a\}}f(t)=(q*\Delta f)_a(t)+f(a)q(t)={^{C}\mathcal{D}}_{\{(q);a\}}f(t)+f(a)q(t).$$
Since $\mathcal{S}_{\{(p);a\}}f(a)=0$, we immediately conclude that ${^{C}\mathcal{D}}_{\{(q);a\}}\mathcal{S}_{\{(p);a\}}f(t)=f(t)$ and, consequently, \eqref{proof1} is proved.

To prove \eqref{d1}, we observe that 
\begin{align*}
\mathcal{S}_{\{(p);a\}}{^{RL}\mathcal{D}}_{\{(q);a\}}f(t)&=\mathcal{S}_{\{(p);a\}}[(q*\Delta f)_a(t)+f(a)q(t)]\\
&=(1*\Delta f)_a+f(a)(p*q)_a=f(t).
\end{align*}
Finally, \eqref{d2} follows from 
\begin{align*}
\mathcal{S}_{\{(p);a\}}{^{C}\mathcal{D}}_{\{(q);a\}}f(t)&=\mathcal{S}_{\{(p);a\}}[^{RL}\mathcal{D}_{\{(q);a\}}f-f(a)q]\\
&=f(t)-f(a)(p*q)_a=f(t)-f(a).
\end{align*}
The proof is done.
\end{proof}



\begin{thebibliography}{99}

\bibitem{Abdeljawad2}
T. Abdeljawad, On delta and nabla Caputo fractional differences and dual identities, Discrete Dyn. Nat. Soc. {\bf 2013}, Art. ID 406910, 12 pp.

\bibitem{Atici1}
F. M. Atici and P. W. Eloe, A transform method in discrete fractional calculus. Int. J. Difference Equ. 2 (2007), no. 2, 165--176.

\bibitem{Bohner}
M. Bohner\ and\ G. Sh. Guseinov, The convolution on time scales, Abstr. Appl. Anal. {\bf 2007}, Art. ID 58373, 24 pp.

\bibitem{Cermak}
J. \v{C}erm\'{a}k, I. Gy\H{o}ri\ and\ L. Nechv\'{a}tal, On explicit stability conditions for a linear fractional difference system, Fract. Calc. Appl. Anal. {\bf 18} (2015), no.~3, 651--672.

\bibitem{Ferreira1}
R. A. C. Ferreira, A discrete fractional Gronwall inequality, Proc. Amer. Math. Soc. {\bf 140} (2012), no.~5, 1605--1612. 

\bibitem{Ferreira3}
R. A. C. Ferreira, Discrete weighted fractional calculus and applications, Nonlinear Dynamics (2021), DOI 10.1007/s11071-021-06410-6.

\bibitem{Ferreira2}
R. A. C. Ferreira, Discrete fractional calculus and the Saalschutz theorem, arXiv:2008.11605v1.

\bibitem{GoodrichBook}
C. Goodrich\ and\ A. C. Peterson, {\it Discrete fractional calculus}, Springer, Cham, 2015. 

\bibitem{Goodrich}
C. Goodrich\ and\ C. Lizama, Positivity, monotonicity, and convexity for convolution operators, Discrete Contin. Dyn. Syst. {\bf 40} (2020), no.~8, 4961--4983.

\bibitem{Gray}
H. L. Gray\ and\ N. F. Zhang, On a new definition of the fractional difference, Math. Comp. {\bf 50} (1988), no.~182, 513--529. 

\bibitem{Kochubei}
A. N. Kochubei, General fractional calculus, evolution equations, and renewal processes, Integral Equations Operator Theory {\bf 71} (2011), no.~4, 583--600.

\bibitem{MillerRoss}
K. S. Miller\ and\ B. Ross, Fractional difference calculus, in {\it Univalent functions, fractional calculus, and their applications (Koriyama, 1988)}, 139--152, Ellis Horwood Ser. Math. Appl, Horwood, Chichester. 


\end{thebibliography}
%

\end{document}